\newtheorem{thm}{Theorem}
\newtheorem{lem}{Lemma}
\newtheorem{pro}{Proposition}
\newtheorem{definition}[thm]{Definition}
\newtheorem{exa}{Example}
\title[Plaque topological stability]{On the plaque topological stability of partially hyperbolic diffeomorphisms}
\author{L. Li}
\address{Department of Mathematics, SUSTech, Shenzhen, 518000, China.}
\email{12131228@mail.sustech.edu.cn}
\author{C.A. Morales}
\address{Hangzhou International Innovation Institute of Beihang University,  
Hangzhou 311115, China}
\email{morales@impa.br}
\author[B. Shin]{B. Shin}
\address{Department of Mathematics, SungKyunKwan University, Suwon, 16419, Republic of Korea}
\email{bmshin@skku.edu}
\keywords{Partially hyperbolic, Closed Manifold, Plaque topologically stable}
\subjclass[2020]{Primary 37B25, Secondary 37D30}
\begin{document}

\maketitle

\begin{abstract}
We prove that every dynamically coherent plaque expansive partially hyperbolic diffeomorphism is topologically stable with respect to the central foliation (in short, {\em plaque topologically stable}).
Next, we study partially hyperbolic diffeomorphisms that are both expansive and topologically stable with respect to a central foliation.
We show that the center chain recurrent set for such diffeomorphisms
belongs to the closure of the center periodic points.
\end{abstract}


\section{Introduction}

\noindent
It is well-known that every uniformly hyperbolic diffeomorphism of a closed manifold is both structurally and topologically stable.
In contrast, partially hyperbolic diffeomorphisms are generally neither structurally nor topologically stable.
This has led some authors to search for alternative notions of stability.
In particular, \cite{hz} introduced the notion of topological quasi-stability and proved that it holds for every partially hyperbolic diffeomorphism.
See \cite{hzz} where another proof of this result using the notion of quasi-shadowing property is given. 

In this paper, we prove that every dynamically coherent, plaque expansive, partially hyperbolic diffeomorphism is topologically stable with respect to the central foliation (for short {\em plaque topologically stable}).
Furthermore, any partially hyperbolic diffeomorphism that is expansive and topologically stable with respect to a central foliation satisfies the following: We show that the center chain recurrent set for such diffeomorphisms
belongs to the closure of the center periodic points.
We now state our results precisely.

\subsection{Basics}
By {\em closed manifold} we mean a compact connected boundaryless $C^\infty$ Riemannian manifold $M$.
A diffeomorphism $f:M\to M$ is {\em partially hyperbolic} if there are constants
$\lambda<\hat{\gamma}<1<\gamma<\mu$, $C\geq1$, and
a $Df$-invariant tangent bundle splitting $TM=E^s\oplus E^c\oplus E^u$
such that for every $x\in M$,
\begin{eqnarray*}
&\|Df^n(x)v^s\|&\leq C\lambda^n\|v^s\|\quad\quad\forall v^s\in E^s_x,\, n>0;\\
C^{-1}\hat{\gamma}^n\|v^c\|\leq &\|Df^n(x)v^c\|&\leq C\gamma^n\|v^c\|\quad\quad\forall v^c\in E^c_x,\, n>0;\\
C^{-1}\mu^n\|v^u\|\leq&\|Df^n(x)v^u\|&\quad\quad\quad\quad\quad\quad\quad\forall v^u\in E^u_x,\, n>0.
\end{eqnarray*}

The stable $E^s$ and unstable $E^u$ subbundles are known to be H\"older continuous and uniquely integrable \cite{hps},
i.e. there exist unique ($C^0$) foliations $\mathfrak{F}^s$ and $\mathfrak{F}^u$, called (strong) stable and unstable, respectively, tangent to $E^s_x$ and $E^u_x$
for all $x\in M$. Furthermore, such foliations are invariant (i.e. their leaves are permuted by $f$).

Following \cite{bbi} and Definition 3 in \cite{kt}, we say that $f$ is {\em dynamically coherent} if the subbundles $E^s \oplus E^c$ and $E^c \oplus E^u$ are
uniquely integrable (some authors omit the uniqueness in this definition).
Partially hyperbolic diffeomorphisms are dynamically coherent in the $3$-torus \cite{bbi} but not in general closed $3$-manifolds(\footnote{Bonatti, C., Gogolev, A., Hammerlindl, A., Potrie, R., Anomalous partially hyperbolic diffeomorphisms III: Abundance and incoherence, {\em Geom. Topol.} 24 (2020), no. 4, 1751--1790.}).
Any invariant foliation tangent to the central direction $E^c$ will be referred to as a {\em center foliation}.
Center foliations need not exist \cite{wil}, and when they do, they are not necessarily unique.
However, if a partially hyperbolic diffeomorphism is dynamically coherent, then it admits a unique center foliation; see Section 2 of \cite{bw}.

\subsection{Set-valued maps and foliations}
We start with set-valued analysis \cite{af}.
Denote by $2^M$ the set of all subsets of $M$.
Maps $H:M\to 2^M$ with $H(x)\neq\emptyset$ for all $x\in M$ are called {\em set-valued}. If $H(x)$ has one point only for all $x\in X$, then we say that $H$ is {\em single-valued}. Clearly, maps $h:M\to M$ can be identified with single-valued maps $H:M\to 2^M$ via the induced set-valued map
$H(x)=\{h(x)\}$ for all $x\in M$. If $H(x)$ is compact for every $x\in M$, we say that $H$ is {\em compact-valued}.

Next, we consider a foliation $\mathfrak{F}$ of $M$.
Given a leaf $L$ of $\mathfrak{F}$
we denote by $d_L$ the intrinsic metric on $L$.
We say that $S\subset M$ is {\em saturated} if $\mathfrak{F}(x)\subset S$ for every $x\in S$. Denote by $\mathfrak{F}(x)$ the leaf of $\mathfrak{F}$ containing $x\in M$. Define
$$
\mathfrak{F}_\epsilon(x)=\{y\in \mathfrak{F}(x):d_{\mathfrak{F}(x)}(x,y)\leq\epsilon\},\quad\quad\forall \epsilon>0.
$$
This is the closed $\epsilon$-ball of $x$ in $\mathfrak{F}(x)$ with respect to the intrinsic metric.

We now define $\mathfrak{F}$-valued maps $H$. The natural way is to require that
$H(x)$ is contained in a leaf of $\mathfrak{F}$ for every $x\in M$ or, equivalently,
$$
H(x)\subset \bigcap_{y\in H(x)}\mathfrak{F}(y),\quad\quad\forall x\in M.
$$
However, we don't want $H(x)$ to be spread in the whole leaf $\mathfrak{F}(y)$ but rarther to be concentrated in the intrinsic ball $\mathfrak{F}_e(y)$ for some $e>0$. By doing so we obtain the following definition.

\begin{definition}
A set-valued map $H:M\to 2^M$ is {\em $\mathfrak{F}$-valued} if there exists
$e>0$ (called valuation constant) such that
$$
H(x)\subset \bigcap_{y\in H(x)}\mathfrak{F}_e(y),\quad\quad\forall x\in X.
$$
\end{definition}

\subsection{Continuity of set-valued maps with respect to foliations}
A natural way to define continuity for set-valued maps $H:M\to 2^M$ is to require that, for every $\rho>0$, there exists $\Delta>0$ such that if $x,x'\in M$ with $d(x,x')\leq \Delta$, then
$D(H(x),H(x'))\leq \rho$, where
\[
D(A,B)=\max\{\operatorname{dist}(A,B),\operatorname{dist}(B,A)\}
\]
is the Hausdorff distance between $A,B\subset M$. Here
\[
\operatorname{dist}(A,B)=\sup_{a\in A}\inf_{b\in B} d(a,b).
\]
If $A=\{a\}$ (resp. $B=\{b\}$) reduces to a single point, we write $\operatorname{dist}(a,B)$ (resp. $\operatorname{dist}(A,b)$) instead of $\operatorname{dist}(\{a\},B)$ (resp. $\operatorname{dist}(A,\{b\})$).

A stronger, though less common, way to define such a continuity is
to require that, for every $\rho>0$, there exists $\Delta>0$ such that if $x,x'\in M$ with $d(x,x')\leq \Delta$, then
\begin{equation}
\label{chucha}
d(y,y')\leq \rho \qquad \forall (y,y')\in H(x)\times H(x').
\end{equation}
Though both definitions reduce to the classical notion of continuity in the single-valued case, they are too restrictive for our purposes.

In the sequel, we modify the latter property to introduce a notion of continuity for set-valued maps $H:M\to 2^M$ with respect to a foliation $\mathfrak{F}$.
More precisely, if $H$ is $\mathfrak{F}$-valued, then the smaller the valuation constant $\epsilon$ is, the closer the local plaque $\mathfrak{F}_\epsilon(y')$ will be to the point $y'$.  
Thus, it is reasonable to replace $y'$ by $\mathfrak{F}_\epsilon(y')$ in \eqref{chucha} to obtain the following definition.

\begin{definition}
An $\mathfrak{F}$-valued map $H:M\to 2^M$ is said to be {\em continuous with respect to $\mathfrak{F}$} if the following property holds for some valuation constant $\epsilon$:  
for every $\rho>0$, there exists $\Delta>0$ such that whenever $x,x'\in M$ with $d(x,x')\leq \Delta$, we have
\[
\operatorname{dist}(y,\mathfrak{F}_\epsilon(y')) \leq \rho,
\qquad \forall (y,y')\in H(x)\times H(x'),
\]
where $\operatorname{dist}(y,\mathfrak{F}_\epsilon(y')):=\inf_{z\in \mathfrak{F}_\epsilon(y')} d(y,z)$.
We then say that $H$ is {\em continuous with respect to $\mathfrak{F}$ with valuation constant $\epsilon$}.
\end{definition}

This definition includes the classical continuity for single-valued maps:

\begin{exa}
A map $h:M\to M$ is continuous if and only if the induced set-valued map
$H:M\to 2^M$ defined by $H(x)=\{h(x)\}$ for all $x\in M$ is continuous with respect to the trivial foliation by points of $M$.
\end{exa}

\subsection{Topological stability with respect to foliations}
To motivate we recall the classical definition of topological stability \cite{w}.
Let $i_M:M\to M$ be the identity map.
The $C^0$ distance between maps $l,r:M\to M$ is defined by
$$
d_{C^0}(l,r)=\sup_{x\in M}d(l(x),r(x)).
$$
We say that a homeomorphism $f:M\to M$ is topologically stable if for every $\epsilon>0$ there is $\delta>0$ such that for every homeomorphism $g:M\to M$ with $d_{C^0}(f,g)<\delta$ there is a continuous map $h:M\to M$ (often called semiconjugation) such that
$$
d_{C^0}(h,id_M)\leq\epsilon
\quad\mbox{ and }\quad
f\circ h=h\circ g.
$$
In a potential definition of topological stability with respect to $\mathfrak{F}$
the role of semiconjugation will be played by $\mathfrak{F}$-valued maps $H:M\to 2^M$.
This forces us to consider the distance between such maps and the identity which we fix as
$$
d_{C^0}(H,id_M)=\sup_{x\in M}dist(H(x),x).
$$
For the conjugating rule $f\circ h=h\circ g$ the usual (e.g. \cite{lm}) is to consider the identity
$f\circ H=H\circ g$ which is equivalent to $f(H(x))=H(g(x))$ for every $x\in M$.
But due to the $\mathfrak{F}$-valued condition this identity implies the inclusion
$$
f(H(x))\subset \bigcup_{y\in H(g(x))}\mathfrak{F}_\epsilon(y),\quad\quad\forall x\in M,
$$
where $\epsilon$ is a valuation constant.
This inclusion is still good for us: the smaller the valuation constant $\epsilon$ is, the closer $H$ is to being a single-valued map, and the closer $f(H(x))$ and $H(g(x))$ will be.
With this in mind we state the following definition.

\begin{definition}
A homeomorphism $f:M\to M$
is {\em topologically stable with respect to $\mathfrak{F}$}
if for every $\epsilon>0$ there is $\delta>0$ with the following property:
for every homeomorphism $g:M\to M$ with $d_{C^0}(f,g)\leq\delta$,
there is a compact-valued map $H:M\to 2^M$ which is continuous with respect to $\mathfrak{F}$ with constant $\epsilon>0$ such that
\begin{equation}
\label{alto}
d_{C^0}(H,i_M)\leq\epsilon\quad\mbox{ and }\quad
f(H(x))\subset \bigcup_{y\in H(g(x))}\mathfrak{F}_{\epsilon}(y)
\quad(\forall x\in M).
\end{equation}
\end{definition}

At first glance we observe that this kind of stability includes the classical one as explained below.

\begin{exa}
\label{air}
Every topologically stable homeomorphism is topologically stable with respect to any foliation. The converse is also true since
a homeomorphism is topologically stable if and only if it is topologically stable with respect to the trivial foliation by points. 
\end{exa}

We use the above definition to introduce the following one.

\begin{definition}
A dynamically coherent partially hyperbolic diffeomorphism is {\em plaque topologically stable} if it is topologically stable with respect to its center foliation.
\end{definition}

\subsection{Results}
Our first result gives a sufficient condition for a dynamically coherent partially hyperbolic diffeomorphism to be plaque topologically stable.
Given such a diffeomorphism $f:M\to M$
and $\epsilon>0$, a $\epsilon$-center pseudo orbit is a bi-infinite sequence
$(x_i)_{i\in\mathbb{Z}}$ such that
$f(x_i)\in \mathfrak{F}^c_\epsilon(x_{i+1})$ for all $i\in\mathbb{Z}$.
We say that $f$ is {\em plaque expansive} \cite{hps} if there is $\epsilon>0$ such that if two $\epsilon$-center pseudo orbits $(x_i)_{i\in\mathbb{Z}}$ and $(y_i)_{i\in\mathbb{Z}}$ satisfy $d(x_i,y_i)<\epsilon$ for all $i\in\mathbb{Z}$, then $y_0\in \mathfrak{F}^c_\epsilon(x_0)$.

Every dynamically coherent plaque expansive partially hyperbolic diffeomorphism $f:M\to M$ is center leaf stable in the following sense: for every partially hyperbolic diffeomorphism $g:M\to M$ which is $C^1$ close to $f$ there is a homeomorphism $h:M\to M$ sending central leaves of $f$ onto central leaves of $g$ (c.f. \cite{hps} or \cite{psw}).
The following result gives another type of stability for such diffeomorphisms.

\begin{thm}
\label{taylor}
Every dynamically coherent plaque expansive partially hyperbolic diffeomorphism is plaque topologically stable.
\end{thm}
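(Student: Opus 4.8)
The plan is to adapt Walters' proof of topological stability for Anosov diffeomorphisms \cite{w}: the role of the shadowing lemma is taken over by a \emph{center shadowing} property and the role of expansiveness by plaque expansiveness. Let $\mathfrak{F}^c$ denote the (unique) center foliation of the given dynamically coherent plaque expansive partially hyperbolic diffeomorphism $f:M\to M$. I would first isolate two auxiliary facts. (i) \textbf{Center shadowing}: for every $\beta>0$ there is $\alpha>0$ such that every $\alpha$-pseudo orbit $(x_i)_{i\in\mathbb{Z}}$ of $f$ (meaning $d(f(x_i),x_{i+1})\le\alpha$ for all $i$) admits a $\beta$-center pseudo orbit $(y_i)_{i\in\mathbb{Z}}$ with $d(x_i,y_i)\le\beta$ for every $i$. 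This is the partially hyperbolic analogue of the classical shadowing lemma; I would obtain it from the normal hyperbolicity of $\mathfrak{F}^c$ by a graph-transform argument over the center plaques, and it is here that dynamical coherence is used, through the center-stable and center-unstable foliations organising the strong stable and unstable corrections (compare \cite{hps}). (ii) \textbf{Quantitative plaque expansiveness}: for every $\rho>0$ there is $\theta>0$ such that any two $\theta$-center pseudo orbits $(x_i)$, $(y_i)$ with $d(x_i,y_i)\le\theta$ for all $i$ satisfy $y_0\in\mathfrak{F}^c_\rho(x_0)$. I would deduce this from plaque expansiveness by a compactness argument on $M^{\mathbb{Z}}$, in the spirit of the analogous statement for expansive homeomorphisms, using the standard continuity of the center foliation of a dynamically coherent partially hyperbolic diffeomorphism (see Section 2 of \cite{bw}).

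Granting (i) and (ii), here is how I would construct $H$. Fix $\epsilon>0$; let $\theta$ be the constant provided by (ii) for $\rho=\epsilon/2$, choose $\beta>0$ with $2\beta\le\theta$, with $\beta\le\epsilon$, and with $\beta$ smaller than a foliation-box scale of $\mathfrak{F}^c$, let $\alpha$ be the constant provided by (i) for this $\beta$, and set $\delta=\min\{\alpha,\epsilon\}$. Let $g:M\to M$ be a homeomorphism with $d_{C^0}(f,g)\le\delta$. For each $x\in M$ the sequence $(g^i(x))_{i\in\mathbb{Z}}$ is an $\alpha$-pseudo orbit of $f$, since $d(f(g^i(x)),g^{i+1}(x))=d(f(g^i(x)),g(g^i(x)))\le\delta\le\alpha$. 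Define
\[
H(x)=\bigl\{\,y_0\in M:\ \text{some }\beta\text{-center pseudo orbit }(y_i)_{i\in\mathbb{Z}}\text{ satisfies }d(y_i,g^i(x))\le\beta\ \text{for all }i\,\bigr\}.
\]
By (i), $H(x)\neq\emptyset$. If $y_0,y_0'\in H(x)$ with witnessing sequences $(y_i)$, $(y_i')$, then $d(y_i,y_i')\le 2\beta\le\theta$ and both are $\theta$-center pseudo orbits, so (ii) gives $y_0'\in\mathfrak{F}^c_{\epsilon/2}(y_0)$; hence $H(x)\subset\bigcap_{y\in H(x)}\mathfrak{F}^c_\epsilon(y)$, i.e.\ $H$ is $\mathfrak{F}^c$-valued with valuation constant $\epsilon$. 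The defining conditions are closed (here the smallness of $\beta$ is used), so each $H(x)$ is compact, and $d(y_0,x)=d(y_0,g^0(x))\le\beta\le\epsilon$ for $y_0\in H(x)$ yields $d_{C^0}(H,i_M)\le\epsilon$.

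It remains to verify the two clauses of \eqref{alto}. For the inclusion $f(H(x))\subset\bigcup_{y\in H(g(x))}\mathfrak{F}^c_\epsilon(y)$, let $z\in f(H(x))$, say $z=f(y_0)$ with witnessing sequence $(y_i)_{i\in\mathbb{Z}}$. The shifted sequence $w_i:=y_{i+1}$ is again a $\beta$-center pseudo orbit and satisfies $d(w_i,g^i(g(x)))=d(y_{i+1},g^{i+1}(x))\le\beta$, so $w_0=y_1\in H(g(x))$; meanwhile $z=f(y_0)\in\mathfrak{F}^c_\beta(y_1)\subset\mathfrak{F}^c_\epsilon(y_1)$ by the pseudo orbit condition at $i=0$, which gives the inclusion. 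For continuity of $H$ with respect to $\mathfrak{F}^c$ with constant $\epsilon$, I would argue by contradiction: if it failed, there would exist $\rho_0>0$, points $x_n,x_n'$ with $d(x_n,x_n')\le 1/n$, and $y_n\in H(x_n)$, $y_n'\in H(x_n')$ with $\operatorname{dist}(y_n,\mathfrak{F}^c_\epsilon(y_n'))>\rho_0$. Passing to a subsequence we get $x_n\to x$ (so $x_n'\to x$) and, by compactness of $M^{\mathbb{Z}}$, coordinatewise convergence of the witnessing sequences of $y_n$ and $y_n'$; since $f$ and each $g^i$ are continuous and $\beta$ is small, the limits are $\beta$-center pseudo orbits shadowing $(g^i(x))_{i\in\mathbb{Z}}$, so the limit points $\bar y=\lim y_n$ and $\bar y'=\lim y_n'$ lie in $H(x)$, whence $\bar y\in\mathfrak{F}^c_{\epsilon/2}(\bar y')$. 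By continuity of the center foliation along paths, $\mathfrak{F}^c_\epsilon(y_n')$ eventually contains points converging to $\bar y$, so $\operatorname{dist}(y_n,\mathfrak{F}^c_\epsilon(y_n'))\le d(y_n,\bar y)+\operatorname{dist}(\bar y,\mathfrak{F}^c_\epsilon(y_n'))\to 0$, contradicting $\operatorname{dist}(y_n,\mathfrak{F}^c_\epsilon(y_n'))>\rho_0$. Thus $H$ witnesses the topological stability of $f$ with respect to $\mathfrak{F}^c$, and $f$ is plaque topologically stable.

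The step I expect to be the main obstacle is (i): proving center shadowing in the precise form stated requires a graph-transform construction over the center plaques with uniform control of the strong stable and unstable corrections, which is the technical core and the point where dynamical coherence enters essentially. Fact (ii) and the limiting arguments above are comparatively routine, but they rest on the standard continuity properties of the center foliation of a dynamically coherent partially hyperbolic diffeomorphism (continuity of leaves and of plaques) and on the choice of $\beta$ small relative to its foliation boxes; I would record these foliation facts in a preliminary lemma before starting the proof.
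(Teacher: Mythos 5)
Your proposal is correct and follows essentially the same route as the paper: the paper reduces Theorem~\ref{taylor} to its Proposition~\ref{thA} (expansivity plus shadowing with respect to $\mathfrak{F}$ implies topological stability with respect to $\mathfrak{F}$), whose proof uses exactly your set-valued map $H(x)=\{y_0:\ (y_k)\ \text{is an}\ (\mathfrak{F},\epsilon')\text{-orbit with}\ d(y_k,g^k(x))\le\epsilon'\}$ and verifies the same five properties by the same shifting, expansivity and compactness arguments (your fact~(ii) and the limiting step in your continuity argument are packaged there as Lemma~\ref{l1} on uniform expansivity). The one ingredient you flag as the ``main obstacle,'' your center shadowing fact~(i), is not proved in the paper either but is quoted from Kryzhevich--Tikhomirov \cite{kt}, so you could simply cite that result rather than redo the graph-transform construction.
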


The well-known {\em plaque expansivity conjecture \cite{hps}} asserts that
every dynamically coherent partially hyperbolic diffeomorphism is plaque expansive.
If true, then we could remove the plaque expansivity hypothesis from this theorem.

In the uniformly hyperbolic case, this theorem reduces to the well-known topological stability of uniformly hyperbolic diffeomorphisms \cite{w1}.
In fact, every uniformly hyperbolic diffeomorphism is dynamically coherent, with the trivial foliation by points serving as the center foliation.
Since it is also expansive, the diffeomorphism is expansive with respect to this foliation.
Consequently, it is plaque expansive and therefore plaque topologically stable by the theorem.
Moreover, because topological stability with respect to the trivial foliation by points coincides with ordinary topological stability (Example \ref{air}), we conclude that the diffeomorphism is topologically stable, as claimed.

Another class of examples where this theorem applies is that of partially hyperbolic diffeomorphisms on the $3$-torus.
In fact, such diffeomorphisms are dynamically coherent \cite{bbi} and plaque expansive \cite{h}; hence, by the theorem, they are plaque topologically stable.

Our next result gives some consequences of the topological stability with respect to center foliations.
To motivate we recall some basic concepts in topological dynamics \cite{ah}. Let $f:M\to M$ be a homeomorphism.
We say that $x\in M$ is a {\em periodic point} if there is $n\in\mathbb{N}$ such that $f^n(x)=x$.
We say that $x$ is {\em nonwandering} if for every neighborhood $U$ of $x$ there is $n\in\mathbb{N}$ such that $f^n(U)\cap U\neq\emptyset$.
Given $\delta>0$, a {\em $\delta$-chain} from $x$ to $y$ is a finite sequence
$x_0,\cdots, x_r$ with $r\geq1$ such that $x_0=x$, $x_r=y$ and
$d(f(x_i),x_{i+1})\leq\delta$ for all $0\leq i\leq r-1$.
We write $x\approx_\delta y$ whenever
$\delta$-chains from $x$ to $y$ and viceversa exist.
We write $x\approx y$ if $x\approx_\delta y$ for every $\delta>0$.
We say that $x\in M$ is {\em chain recurrent} if
$x\approx x$. Denoting $Per(f)$, $\Omega(x)$ and $CR(f)$ the set of periodic, nonwandering and chain recurrent points respectively we have these sets are invariant, $\Omega(f)$ is closed nonempty and
$Per(f)\subset \Omega(f)\subset CR(f)$. See \cite{ah} for details.

Now, suppose that $f$ is a partially hyperbolic diffeomorphism with a prescribed center foliation $\mathcal{F}^c$.
Following p. 417 in \cite{hzz}
we say that $x\in M$ is {\em center nonwandering} if for every saturated open neighborhood $U$ of $x$ there is $n\geq1$
such that $f^n(U)\cap U\neq\emptyset$.
Moreover, $x$ is {\em center periodic}
if $\mathfrak{F}^c(f^k(x))=\mathfrak{F}^c(x)$ for some $k\geq1$.

Next, we introduce the following definitions:
Given $\delta>0$, an {\em $(\mathfrak{F}^c,\delta)$-chain} from $x$ to $y$ is a finite sequence
$x_0,\cdots, x_r$ with $r\geq1$ such that $x_0=x$, $x_r=y$ and
$$
dist(f(x_i),\mathfrak{F}^c_\delta(x_{i+1}))\leq\delta,\quad\quad\forall 0\leq i\leq r-1.
$$

On the other hand, write $x\approx_{\mathfrak{F}^c,\delta} y$ whenever
$(\mathfrak{F}^c,\delta)$-chains from $x$ to $y$ and vice versa exist.
We write $x\approx_{\mathfrak{F}^c} y$ if $x\approx_{\mathfrak{F}^c,\delta} y$ for every $\delta>0$.
We say that $x\in M$ is {\em center-chain recurrent} if
$x\approx_{\mathfrak{F}^c} x$.

Denoting $Per^c(f)$, $\Omega^c(x)$ and $CR^c(f)$ the set of center periodic, center nonwandering and center chain recurrent points respectively we have these sets are invariant, $\Omega^c(f)$ is closed nonempty and
$Per^c(f)\subset \Omega^c(f)$.
Furthermore,
$Per(f)\subset Per^c(f)$ and $\Omega(f)\subset \Omega^c(f)$.

We will prove the following result.

\begin{thm}
\label{cirus}
Let $f:M\to M$ be a partially hyperbolic diffeomorphism of a closed manifold. If $f$ is expansive and topologically stable with respect to a center foliation $\mathfrak{F}^c$, then
$CR^c(f)\subset\overline{Per^c(f)}\subset \Omega^c(f).$
\end{thm}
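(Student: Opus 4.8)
The plan is to adapt to the foliated setting the classical mechanism by which expansivity together with topological stability forces chain recurrent points into the closure of the periodic ones.

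The inclusion $\overline{Per^c(f)}\subset\Omega^c(f)$ is immediate: by the facts recalled above $Per^c(f)\subset\Omega^c(f)$ and $\Omega^c(f)$ is closed, so $\overline{Per^c(f)}\subset\overline{\Omega^c(f)}=\Omega^c(f)$. Everything therefore reduces to $CR^c(f)\subset\overline{Per^c(f)}$, and I will in fact produce ordinary periodic points, i.e. prove $CR^c(f)\subset\overline{Per(f)}$, which suffices since $Per(f)\subset Per^c(f)$. Throughout, fix the expansivity constant $c>0$ of $f$ and a scale $\epsilon_0>0$ below which the local center plaques $p\mapsto\mathfrak{F}^c_\epsilon(p)$ vary continuously in the Hausdorff metric (a standard property of the center foliation).

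The technical heart is a \emph{center shadowing lemma}: if $f$ is expansive and topologically stable with respect to $\mathfrak{F}^c$, then for every $\beta>0$ there is $\alpha>0$ such that every bi-infinite $\alpha$-center pseudo orbit $(y_k)_{k\in\mathbb{Z}}$ admits a point $w\in M$ with $d(f^k(w),y_k)\le\beta$ for all $k\in\mathbb{Z}$ (in fact with $f^k(w)\in\mathfrak{F}^c_\beta(y_k)$). I expect this to be the main obstacle. The natural route is to transcribe Walters' argument that expansivity plus topological stability yields the shadowing property into the present set-valued, foliated framework: reduce to long periodic truncations of $(y_k)$, realize these as periodic orbits of homeomorphisms $C^0$-close to $f$, feed them to topological stability with respect to $\mathfrak{F}^c$ to obtain compact-valued maps $H$ with $d_{C^0}(H,i_M)$ small and $f(H(\cdot))\subset\bigcup_{u\in H(g(\cdot))}\mathfrak{F}^c_\epsilon(u)$, and extract $w$ by a compactness argument; expansivity is what pins down the shadowing point. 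The delicate part is to keep the center plaques from spreading out, since the center dynamics need not be isometric.

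Granting the lemma, fix $x\in CR^c(f)$ and $\eta>0$. Choose $\beta>0$ with $\beta<\min\{\eta/2,\,c/4\}$, let $\alpha>0$ be given by the lemma, put $\epsilon=\min\{\alpha,\beta,\epsilon_0\}$, and let $\delta>0$ be provided by topological stability with respect to $\mathfrak{F}^c$ applied with constant $\epsilon$. Since $x\in CR^c(f)$ there is a closed $(\mathfrak{F}^c,\delta/3)$-chain $x=x_0,x_1,\dots,x_r=x$, and after deleting sub-loops we may assume $x_0,\dots,x_{r-1}$ are pairwise distinct; the chain condition yields $d(f(x_i),x_{i+1})\le 2\delta/3$ for $0\le i\le r-1$. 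Pick a homeomorphism $\psi:M\to M$ with $d_{C^0}(\psi,i_M)\le\delta$ and $\psi(f(x_i))=x_{i+1}$ $(0\le i\le r-1)$; then $g=\psi\circ f$ satisfies $d_{C^0}(f,g)\le\delta$ and $g(x_i)=x_{i+1}$ cyclically, so $x_0$ is a periodic point of $g$ of period $r$. Topological stability provides a compact-valued $H$, continuous with respect to $\mathfrak{F}^c$ with constant $\epsilon$, such that $d_{C^0}(H,i_M)\le\epsilon$ and $f(H(y))\subset\bigcup_{u\in H(g(y))}\mathfrak{F}^c_\epsilon(u)$ for all $y$. Now pick $z_0\in H(x_0)$ arbitrarily; given $z_k\in H(x_{k\bmod r})$, the inclusion and $g(x_{k\bmod r})=x_{(k+1)\bmod r}$ give $f(z_k)\in\bigcup_{u\in H(x_{(k+1)\bmod r})}\mathfrak{F}^c_\epsilon(u)$, so we may choose $z_{k+1}\in H(x_{(k+1)\bmod r})$ with $f(z_k)\in\mathfrak{F}^c_\epsilon(z_{k+1})$. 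Running this forward produces, for each $N$, a forward $\epsilon$-center pseudo orbit $z_0,\dots,z_N$ with $z_k\in H(x_{k\bmod r})$; a standard compactness/diagonal argument over these pieces, suitably translated (using $\epsilon\le\epsilon_0$, so the relations $f(a)\in\mathfrak{F}^c_\epsilon(b)$ are closed), yields a bi-infinite $\epsilon$-center pseudo orbit $(z_k)_{k\in\mathbb{Z}}$ with $z_k\in H(x_{k\bmod r})$ for every $k$; in particular $d(z_k,x_{k\bmod r})\le\epsilon$, whence $d(z_k,z_{k+r})\le 2\epsilon$, for all $k$.

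Since $\epsilon\le\alpha$, the center shadowing lemma applies to $(z_k)$ and yields $w\in M$ with $d(f^k(w),z_k)\le\beta$ for all $k\in\mathbb{Z}$. Combining this with $d(z_k,z_{k+r})\le 2\epsilon$ and the identity $f^{k+r}(w)=f^k(f^r(w))$ gives $d\big(f^k(w),f^k(f^r(w))\big)\le 2\beta+2\epsilon<c$ for every $k\in\mathbb{Z}$ (as $\epsilon\le\beta<c/4$); hence expansivity, applied to the pair $w$ and $f^r(w)$, forces $w=f^r(w)$, so $w\in Per(f)\subset Per^c(f)$. Moreover $d(w,x)\le d(w,z_0)+d(z_0,x_0)\le\beta+\epsilon\le 2\beta<\eta$. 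Since $\eta>0$ was arbitrary, $x\in\overline{Per(f)}\subset\overline{Per^c(f)}$, which completes the proof.
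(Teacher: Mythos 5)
Your reduction to $CR^c(f)\subset\overline{Per^c(f)}$, and the construction of a bi-infinite $\epsilon$-center pseudo orbit $(z_k)$ with $z_k\in H(x_{k\bmod r})$ via a Walters-type perturbation $g=\psi\circ f$ and the semiconjugating set-valued map $H$, are sound and coincide in substance with the paper's Proposition that topological stability with respect to $\mathfrak{F}$ implies the shadowing property with respect to $\mathfrak{F}$ (modulo the separate treatment of $\dim M=1$ needed for Walters' lemmas). The genuine gap is the ``center shadowing lemma'' on which everything afterwards rests: you assert that every $\alpha$-center pseudo orbit is $\beta$-traced by a \emph{true} orbit $(f^k(w))_{k\in\mathbb{Z}}$. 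This is strictly stronger than anything the hypotheses deliver. Topological stability with respect to $\mathfrak{F}^c$ only produces shadowing of pseudo orbits by $(\mathfrak{F}^c,\epsilon)$-orbits, i.e.\ by other center pseudo orbits; the Walters-type scheme you sketch gives no mechanism for collapsing the residual center jumps $f(y_k)\in\mathfrak{F}^c_\epsilon(y_{k+1})$ into exact orbit relations, and in general they cannot be collapsed, because the center dynamics may be neutral and the drift accumulated along the plaques is not corrected by any nearby true orbit. You yourself flag this lemma as ``the main obstacle'' and do not prove it; without it the final expansivity step comparing $w$ and $f^r(w)$ never gets off the ground. Relatedly, your aim of producing genuine periodic points ($CR^c(f)\subset\overline{Per(f)}$) overshoots the theorem, which only asserts density of \emph{center} periodic points.

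The argument closes without any such lemma if ``expansive'' is read, as the paper intends, as expansivity with respect to $\mathfrak{F}^c$ (plaque expansivity) --- note that ordinary expansivity, which compares true orbits, is of no direct use against pairs of center pseudo orbits. One compares the shadowing center pseudo orbit $(z_k)$ with its shift $(z_{k+r})$: both are $(\mathfrak{F}^c,e)$-orbits staying within $2\epsilon'<e$ of each other for all $k$, so plaque expansivity yields $z_0\in\mathfrak{F}^c_{\epsilon_0}(z_r)$, hence $\mathfrak{F}^c(z_0)=\mathfrak{F}^c(z_r)$. Since $f$ permutes the leaves of $\mathfrak{F}^c$ and $f(z_k)\in\mathfrak{F}^c(z_{k+1})$ for all $k$, one gets $\mathfrak{F}^c(f^r(z_0))=\mathfrak{F}^c(z_r)=\mathfrak{F}^c(z_0)$, so $z_0\in Per^c(f)$ with $d(x,z_0)\le\epsilon'<\epsilon$. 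This gives exactly $CR^c(f)\subset\overline{Per^c(f)}$, and the remaining inclusion $\overline{Per^c(f)}\subset\Omega^c(f)$ follows, as you note, from $Per^c(f)\subset\Omega^c(f)$ and the closedness of $\Omega^c(f)$.
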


Since we are not assuming dynamical coherence, we cannot apply Theorem \ref{taylor} to remove the topological stability condition from this statement. In a previous version of this paper we claimed that $\Omega^c(f)\subset CR^c(f)$, which would allow the inclusions in the above statements to be replaced by equalities. However, we do not know whether the inclusion $\Omega^c(f)\subset CR^c(f)$ actually holds. We are grateful to the anonymous referee for bringing this to our attention.

We can also compare Theorem \ref{cirus} with Theorem D in \cite{hzz} where partially hyperbolic diffeomorphism with uniformly compact $C^1$ center foliation were considered. See also the recent $C^r$ chain closing lemma for partially hyperbolic diffeomorphisms \cite{sw}. Theorem \ref{cirus} can be applied to any partially hyperbolic diffeomorphisms of the $3$-torus (likewise those in Proposition 5 of \cite{b}).

\section{Proof of the theorems}

\noindent
We derive the theorems from three propositions about foliation-preserving homeomorphisms.
Let $M$ be a closed manifold.
Let $f:M\to M$ denote a homeomorphism and $\mathfrak{F}$ be a foliation of $M$.
We say that $f$ {\em preserves $\mathfrak{F}$} if $f$ permutes the leaves of $\mathfrak{F}$ i.e.
$f(\mathfrak{F}(x))=\mathfrak{F}(f(x))$ for every $x\in M$.
{\em In what follows we assume that $f$ preserves $\mathfrak{F}$}.

An $(\mathfrak{F},\epsilon)$-orbit of $f$ is a bi-infinite sequence
$(x_k)_{k\in\mathbb{Z}}$ satisfying
$$
f(x_k)\in \mathfrak{F}_\epsilon(x_{k+1}),\quad\quad\forall k\in\mathbb{Z}.
$$
The next definition can be found in \cite{hps}.

\begin{definition}
We say that
$f$ is {\em expansive with respect to $\mathfrak{F}$}
if for every $\epsilon_0>0$ there is $e>0$
(which we call {\em expansivity constant} for the given $\epsilon_0$) such that if $(x_k)_{k\in\mathbb{Z}}$ and $(y_k)_{k\in\mathbb{Z}}$ are $(\mathfrak{F},e)$-orbits
with
$$
d(x_k,y_k)\leq e,\quad\quad\forall k\in\mathbb{Z},
$$
then $y_0\in \mathfrak{F}_{\epsilon_0}(x_0)$.
\end{definition}

Actually \cite{hps} used the name "plaque expansivity" but here we deserve that name when considering the center foliation of a dynamically coherent partially hyperbolic diffeomorphism. We also choose the above name to emphasize the dependence on $\mathfrak{F}$.

This definition generalizes the notion of expansivity in topological dynamics.
Recall that a homeomorphism $f:M\to M$ is {\em expansive} \cite{ah} if there is $\epsilon>0$ such that $x=y$ whenever $x,y\in M$ and $d(f^n(x),f^n(y))<\epsilon$ for every $n\in\mathbb{Z}$.

\begin{exa}
\label{lotus}
A homeomorphism of a closed manifold is expansive if and only if it is expansive with respect to the foliation by points.
Every homeomorphism is expansive with respect to the foliation with only one leaf.
\end{exa}

\begin{proof}
The first assertion is easier to prove so we prove the second one only.
Let $M$ be a close manifold and $\mathfrak{F}=\{M\}$ be the foliation with the whole manifold as unique leaf.
Let $f:M\to M$ be a homeomorphism. Clearly preserves $\mathfrak{F}$.
Notice that $\mathfrak{F}_\epsilon(x)=B[x,\epsilon]$ is the closed $\epsilon$-ball centered at $x$
($\forall x\in M$).
Fix $\epsilon_0>0$ and take $e=\epsilon_0$.
Suppose that $(x_k)_{k\in\mathbb{Z}}$ and $(y_k)_{k\in\mathbb{Z}}$ are $(\mathfrak{F},e)$-orbits with
$d(x_k,y_x)\leq e$ for all $k\in \mathbb{Z}$.
In particular, $y_0\in B[x_0,\epsilon_0]=\mathfrak{F}_{\epsilon_0}(x_0)$ hence $f$ is expansive with respect to $\mathfrak{F}$.
\end{proof}

We will use the following result corresponding to Lemma 2 in \cite{w}.

\begin{lem}
\label{l1}
If a homeomorphism $f:M\to M$ is expansive with respect to $\mathfrak{F}$, then $f$ is {\em uniformly expansive with respect to $\mathfrak{F}$} in the following sense: For every $\epsilon_0>0$
there is $e>0$ (called {\em uniform expansivity constant} for the given $\epsilon_0$) such that for every $\rho>0$ there is $N\in\mathbb{N}$ such that if $(x_k)_{k=-N}^N$ and $(y_k)_{k=-N}^N$ are finite sequences satisfying
\begin{enumerate}
\item[(a)]
$f(x_k)\in \mathfrak{F}_e(x_{k+1})$ and $f(y_k)\in \mathfrak{F}_e(y_{k+1})$ for
$-N\leq k\leq N-1$;
\item[(b)]
$d(x_k,y_k)\leq e$ for $-N\leq k\leq N$,
\end{enumerate}
then
$dist(y_0,\mathfrak{F}_{\epsilon_0}(x_0))\leq \rho.$
\end{lem}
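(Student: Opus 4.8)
The plan is to argue by contradiction, following the classical compactness argument of Lemma 2 in \cite{w} but carried out in the foliated setting. First I would fix $\epsilon_0>0$ and let $e>0$ be an expansivity constant (in the sense of expansivity with respect to $\mathfrak{F}$) for the parameter $\epsilon_0/2$; I claim this $e$ works as a uniform expansivity constant for $\epsilon_0$. Suppose not: then there is $\rho>0$ such that for every $N\in\mathbb{N}$ one can find finite sequences $(x_k^N)_{k=-N}^N$ and $(y_k^N)_{k=-N}^N$ satisfying (a) and (b) but with $\operatorname{dist}(y_0^N,\mathfrak{F}_{\epsilon_0}(x_0^N))>\rho$. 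By compactness of $M$ and a diagonal extraction over $k\in\mathbb{Z}$, pass to a subsequence of $N$'s along which $x_k^N\to x_k$ and $y_k^N\to y_k$ for every $k\in\mathbb{Z}$, producing bi-infinite sequences $(x_k)_{k\in\mathbb{Z}}$ and $(y_k)_{k\in\mathbb{Z}}$.

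The heart of the matter is to transfer conditions (a) and (b), together with the inequality $\operatorname{dist}(y_0^N,\mathfrak{F}_{\epsilon_0}(x_0^N))>\rho$, to the limit. Condition (b) is immediate, since $d(x_k,y_k)\le e$ is a closed condition. For (a) one needs the following \emph{upper semicontinuity of closed plaques}: if $p_n\in\mathfrak{F}_e(q_n)$ with $q_n\to q$ and $p_n\to p$, then $p\in\mathfrak{F}_e(q)$. I would prove this by joining $q_n$ to $p_n$ by a path in $\mathfrak{F}(q_n)$ of intrinsic length at most $e$, reparametrizing it by a multiple of arc length so that it becomes $e$-Lipschitz, applying the Arzel\`a--Ascoli theorem to extract a uniform limit path, and then using the local product structure of $\mathfrak{F}$ (foliation charts) together with the lower semicontinuity of length to conclude that the limit path lies in $\mathfrak{F}(q)$ and has length at most $e$ (here one uses that closed intrinsic balls in leaves are compact). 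Granting this, and recalling that $f$ is continuous so $f(x_k^N)\to f(x_k)$, we obtain $f(x_k)\in\mathfrak{F}_e(x_{k+1})$ for all $k\in\mathbb{Z}$, so that $(x_k)_{k\in\mathbb{Z}}$ and $(y_k)_{k\in\mathbb{Z}}$ are $(\mathfrak{F},e)$-orbits with $d(x_k,y_k)\le e$ for all $k$. Expansivity with respect to $\mathfrak{F}$, applied with $e$ the expansivity constant for $\epsilon_0/2$, then yields $y_0\in\mathfrak{F}_{\epsilon_0/2}(x_0)$.

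To close the contradiction I would use a companion \emph{lower semicontinuity} statement: since $y_0\in\mathfrak{F}_{\epsilon_0/2}(x_0)$, a path in $\mathfrak{F}(x_0)$ from $x_0$ to $y_0$ of length at most $\epsilon_0/2$ can be pushed, using continuity of the foliation, to a nearby path starting at $x_0^N$ inside $\mathfrak{F}(x_0^N)$ whose length exceeds $\epsilon_0/2$ by an amount that tends to $0$ as $N\to\infty$; for large $N$ this length is $\le\epsilon_0$, so its endpoint $w_N$ lies in $\mathfrak{F}_{\epsilon_0}(x_0^N)$ and $w_N\to y_0$. Since also $y_0^N\to y_0$, we get $\operatorname{dist}(y_0^N,\mathfrak{F}_{\epsilon_0}(x_0^N))\le d(y_0^N,w_N)\to 0$, contradicting $\operatorname{dist}(y_0^N,\mathfrak{F}_{\epsilon_0}(x_0^N))>\rho$. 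This proves the lemma.

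The main obstacle, and the only place where something beyond Walters' classical argument is required, is exactly the two semicontinuity properties of the assignment $x\mapsto\mathfrak{F}_\delta(x)$. They follow from the local product structure of a (continuous) foliation and the completeness of its leaves, but they must be handled with care because the constant $e$ cannot be allowed to grow when passing to the limit; this is precisely why one starts from the expansivity constant for $\epsilon_0/2$ rather than for $\epsilon_0$, so that the unavoidable $o(1)$ loss in the path-pushing step is absorbed.
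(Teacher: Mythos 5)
Your proof is correct and follows essentially the same compactness--contradiction scheme as the paper's own proof of Lemma~\ref{l1}: negate uniform expansivity, extract diagonal limits $(x_k)$, $(y_k)$, pass (a) and (b) to the limit, apply expansivity, and contradict $\operatorname{dist}(y_0^N,\mathfrak{F}_{\epsilon_0}(x_0^N))>\rho$. In fact your write-up is more careful than the paper's: the two places where the paper simply ``lets $N\to\infty$'' --- in condition (a) and in the inequality $\operatorname{dist}(y_0^N,\mathfrak{F}_{\epsilon_0}(x_0^N))>\rho$ --- tacitly use exactly the upper and lower semicontinuity of $x\mapsto\mathfrak{F}_\delta(x)$ that you isolate and prove, and your choice of the expansivity constant for $\epsilon_0/2$ rather than $\epsilon_0$ is genuinely needed to handle the boundary case $d_{\mathfrak{F}(x_0)}(x_0,y_0)=\epsilon_0$, where lower semicontinuity of the closed plaque of radius exactly $\epsilon_0$ could fail.
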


\begin{proof}
Fix $\epsilon_0>0$ and let
$e$ be an expansivity constant for this $\epsilon_0$.
If $e$ were not an uniform expansivity constant for $\epsilon_0$, there would exist $\rho>0$ such that for any $N\in \mathbb{N}$ there are finite sequences
$(x_k^N)_{k=-N}^N$ and $(y_k)_{k=-N}^N$ for $N\in\mathbb{N}$ satisfying
$f(z_k^N)\in \mathfrak{F}_e(z^N_{k+1})$ and $d(x_k^N,y_k^N)\leq e$
for $-N\leq k\leq N-1,\, z=x,y,$
but
\begin{equation}
\label{kurt}
dist(y_0^N,\mathfrak{F}_{\epsilon_0}(x_0^N))>\rho,\quad\forall N\in \mathbb{N}.
\end{equation}
Since $M$ is compact, by the Cantor diagonal argument, we can assume
that there are sequences $(x_k)_{k\in\mathbb{Z}}$ and $(y_k)_{k\in\mathbb{Z}}$
such that
$$
\lim_{N\to\infty}x^N_k=x_k\quad\mbox{ and }\quad  \lim_{N\to\infty}y^N_k=y_k.
$$
Fixing $k$ and letting $N\to\infty$ in (a) we get
$$
f(z_k)\in \mathfrak{F}_e(z_{k+1})\quad\quad(\forall k\in\mathbb{Z}).
$$
Then, $(z_k)_{k\in\mathbb{Z}}$ is a $(\mathfrak{F},e)$-orbit for $z=x,y$.
Likewise, fixing $k$ and letting $N\to\infty$ in (b) we get
$$
d(x_k,y_k)\leq e,\quad\quad\forall k\in\mathbb{Z}.
$$
Since  $e$ is an expansivity constant for $\epsilon_0$,
\begin{equation}
\label{kurdo}
y_0\in \mathfrak{F}_{\epsilon_0}(x_0).
\end{equation}
But by letting $N\to\infty$ in \eqref{kurt} we get
$$
dist(y_0,\mathfrak{F}_{\epsilon_0}(x_0))\geq \rho>0
$$
contradicting \eqref{kurdo}. This completes the proof.
\end{proof}

The next definition seems to be new.
It is inspired by the plaque expansivity and the central shadowing for partially hyperbolic diffeomorphisms \cite[Definition 8]{kt}.

\begin{definition}
\label{parate}
We say that $f$ has the {\em shadowing property with respect to $\mathfrak{F}$}
if for every $\epsilon>0$ there is $\delta>0$ such that
every $\delta$-pseudo orbit $(y_k)_{k\in\mathbb{Z}}$ can be {\em $\epsilon$-shadowed by an $(\mathfrak{F},\epsilon)$-orbit} $(x_k)_{k\in\mathbb{Z}}$ i.e.
$$
d(x_k,y_k)\leq \epsilon,\quad\quad\forall k\in\mathbb{Z}.
$$
\end{definition}

We consider the case of the foliation with only one leaf.

\begin{exa}
\label{interferiu}
Let $\mathfrak{F}=\{M\}$ be the foliation with the whole manifold as unique leaf.
Then, every homeomorphism $f:M\to M$ has the shadowing property with respect to $\mathfrak{F}$.
\end{exa}

\begin{proof}
Given $\epsilon>0$ we take $\delta=\epsilon$. Let $(y_k)_{\in\mathbb{Z}}$ be a $\delta$-pseudo orbit of $f$, i.e.,
$$
f(y_k)\in B[y_{k+1},\delta],\quad\quad\forall k\in\mathbb{Z}.
$$
Define $x_k=y_k$ for all $k\in\mathbb{Z}$.
Since
$$
f(x_k)=f(y_k)\in B[y_{k+1},\delta]=B[x_{k+1},\epsilon]=\mathfrak{F}_\epsilon(x_{k+1})
$$
for all $k\in\mathbb{Z}$,
we have that $(x_k)_{k\in\mathbb{Z}}$ is an $(\mathfrak{F},\epsilon)$-orbit.
Since
$d(x_k,y_k)=0\leq\epsilon$ for all $k\in\mathbb{Z}$, $(x_k)_{k\in\mathbb{Z}}$ $\epsilon$-shadows $(y_k)_{k\in\mathbb{Z}}$ proving the result.
\end{proof}

The definition of shadowing respect to a foliation extends the classical shadowing property \cite{ah}.
Recall that a homeomorphism $f:M\to M$ has the shadowing property if
for every $\epsilon>0$ there is $\delta>0$ such that every $\delta$-pseudo orbit
$(x_k)_{k\in\mathbb{Z}}$ can be $\epsilon$-shadowed i.e. there is $x\in M$ such that
$d(f^k(x),x_k)<\epsilon$ for all $k\in\mathbb{Z}$.

\begin{exa}
Every homeomorphism with the shadowing property has the shadowing property with respect to any foliation.
Conversely, every homeomorphism with the shadowing property with respect to any foliation has the shadowing property. This follows since the shadowing property with respect to the trivial foliation by points reduces to the shadowing property.
\end{exa}

We reduce the shadowing property above to the following one:

\begin{definition}
Let $f:M\to M$ be a homeomorphism preserving $\mathfrak{F}$.
We say that $f$ has the {\em finite shadowing property} with respect to $\mathfrak{F}$ if
for every $\epsilon>0$ there is $\delta>0$ such that
for every $\delta$-chain $x_0,\cdots, x_n$
there is an $(\mathfrak{F},\epsilon)$-chain $y_0,\cdots, y_n$ such that
$$
d(x_k,y_k)\leq\epsilon,\quad\quad 0\leq k\leq n.
$$
\end{definition}

We obtain the following result (compare with Lemma 8 in \cite{w}).

\begin{lem}
\label{lei}
Let $f:M\to M$ be a homeomorphism preserving a foliation $\mathfrak{F}$. 
If $f$ has the finite shadowing property with respect to $\mathfrak{F}$, then $f$ has the shadowing property with respect to $\mathfrak{F}$.
\end{lem}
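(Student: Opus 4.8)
The plan is to upgrade the finite shadowing property to the bi‑infinite one by a standard compactness (Cantor diagonal) argument, in the spirit of Lemma 8 in \cite{w}. Fix $\epsilon>0$. First I would apply the finite shadowing property with respect to $\mathfrak{F}$ to the number $\epsilon/2$, obtaining the associated $\delta>0$; this will be the $\delta$ required by Definition \ref{parate}. Let $(y_k)_{k\in\mathbb{Z}}$ be a $\delta$-pseudo orbit of $f$. For each $n\in\mathbb{N}$ the finite segment $y_{-n},y_{-n+1},\dots,y_n$ is a $\delta$-chain, so the finite shadowing property yields an $(\mathfrak{F},\epsilon/2)$-chain $(x^n_k)_{k=-n}^{n}$ with $d(x^n_k,y_k)\leq\epsilon/2$ for $-n\leq k\leq n$. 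Since $M$ is compact, a Cantor diagonal argument over $k\in\mathbb{Z}$ produces an increasing sequence $n_j\to\infty$ and points $x_k\in M$ ($k\in\mathbb{Z}$) with $\lim_{j\to\infty}x^{n_j}_k=x_k$ for every $k$.

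Next I would verify that $(x_k)_{k\in\mathbb{Z}}$ is an $(\mathfrak{F},\epsilon)$-orbit that $\epsilon$-shadows $(y_k)_{k\in\mathbb{Z}}$. Letting $j\to\infty$ in $d(x^{n_j}_k,y_k)\leq\epsilon/2$ gives $d(x_k,y_k)\leq\epsilon/2\leq\epsilon$ for all $k$, which is the required shadowing estimate. For the orbit condition, fix $k\in\mathbb{Z}$: for all $j$ with $n_j\geq k+1$ one has $f(x^{n_j}_k)\in\mathfrak{F}_{\epsilon/2}(x^{n_j}_{k+1})$; since $f$ is continuous, $f(x^{n_j}_k)\to f(x_k)$ and $x^{n_j}_{k+1}\to x_{k+1}$, and letting $j\to\infty$ we conclude $f(x_k)\in\mathfrak{F}_{\epsilon}(x_{k+1})$. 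As $k$ is arbitrary, $(x_k)_{k\in\mathbb{Z}}$ is an $(\mathfrak{F},\epsilon)$-orbit, which finishes the proof.

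The step I expect to be the main obstacle is the very last limit: for a fixed radius $r$ the relation ``$a\in\mathfrak{F}_r(b)$'' need not be closed in the pair $(a,b)$, since the intrinsic distance inside a leaf may drop under limits of nearby leaves. This is exactly where the slack between $\epsilon/2$ and $\epsilon$ enters. Writing $b_j=x^{n_j}_{k+1}\to b=x_{k+1}$ and $a_j=f(x^{n_j}_k)\to a=f(x_k)$, one uses a finite cover of $M$ by foliation boxes and the uniform continuity of the plaques on each box to transport a path of length $\leq\epsilon/2$ joining $b_j$ to $a_j$ inside $\mathfrak{F}(b_j)$ to a path inside $\mathfrak{F}(b)$ joining $b$ to $a$ whose length exceeds $\epsilon/2$ by an amount tending to $0$ as $b_j\to b$ and $a_j\to a$; hence in the limit $d_{\mathfrak{F}(b)}(a,b)\leq\epsilon$, i.e. $a\in\mathfrak{F}_\epsilon(b)$. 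Spelling out this transport estimate is the only non‑formal part of the argument, the rest being the routine diagonal extraction described above.
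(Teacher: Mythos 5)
Your proposal is correct and follows essentially the same route as the paper: apply finite shadowing to the truncations of the pseudo-orbit, extract a limit by the Cantor diagonal argument, and pass the chain conditions to the limit. The one difference is that you shadow at scale $\epsilon/2$ and reserve the slack for the last step, because the relation $a\in\mathfrak{F}_r(b)$ need not obviously be closed under limits $(a_j,b_j)\to(a,b)$; the paper simply takes the limit at scale $\epsilon$ and treats this closedness as immediate, so your version is, if anything, the more careful one (and your foliation-box transport argument is the standard way to justify that step).
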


\begin{proof}
Fix $\epsilon>0$ and let $\delta$ be given by the finite shadowing property.
Let $(x_k)_{k\in\mathbb{Z}}$ be a $\delta$-pseudo orbit of $f$.
Then, by the finite plaque shadowing, for all $m\in\mathbb{N}$ there is an $(\mathfrak{F},\epsilon)$-chain
$x_{-m}^m,\cdots, x^m_m$ such that
$$
d(x^m_k,x_k)\leq \epsilon,\quad\quad\forall -m\leq k\leq m.
$$
By standard Cantor diagonal argument there exists
a strictly increasing sequence $(m_j)_{j \in\mathbb{N}}$ with $m_j \to \infty$
and a bi-infinite sequence $(y_k)_{k \in \mathbb{Z}}$ in $X$,
such that for every fixed $k \in \mathbb{Z}$,
$$
x^{m_j}_k \ \longrightarrow\ y_k \quad \text{as } j \to \infty.
$$
Without loss of generality we can assume that $m_j=m$ for all $j\in\mathbb{N}$.

Fixing $k$ and letting $m\to\infty$ in the previous inequality we obtain
\begin{equation}
\label{canetada}
d(y_k,x_k)\leq \epsilon,\quad\quad\forall k\in\mathbb{Z}.
\end{equation}
On the other hand, since $x_{-m}^m,\cdots, x^m_m$ is an $(\mathfrak{F},\epsilon)$-chain we obtain
$$
f(x^m_k)\in\mathfrak{F}_\epsilon(x^m_{k+1}),
\quad\quad\forall m\in\mathbb{N}, -m\leq k\leq m-1.
$$
Then, fixing $k$ and letting $m\to\infty$ we get
$$
f(y_k)\in \mathfrak{F}_\epsilon(y_{k+1}),\quad\quad\forall k\in\mathbb{Z}.
$$
Hence $(y_k)_{k\in\mathbb{Z}}$ is a $(\mathfrak{F},\epsilon)$-orbit satisfying
\eqref{canetada}. Therefore, $f$ has the plaque shadowing property
and we are done.
\end{proof}

We say that $x\in M$ is {\em $\mathfrak{F}$-nonwandering} if for every saturated open neighborhood $U$ of $x$ there is $n\geq1$
such that $f^n(U)\cap U\neq\emptyset$.
We say that $x$ is {\em $\mathfrak{F}$-periodic}
if $\mathfrak{F}(f^k(x))=\mathfrak{F}(x)$ for some $k\geq1$.
Given $\delta>0$, an {\em $(\mathfrak{F},\delta)$-chain} from $x$ to $y$ is a finite sequence
$x_0,\cdots, x_r$ with $r\geq1$ such that $x_0=x$, $x_r=y$ and
$$
dist(f(x_i),\mathfrak{F}_\delta(x_{i+1}))\leq\delta,\quad\quad\forall 0\leq i\leq r-1.
$$

On the other hand, write $x\approx_{\mathfrak{F},\delta} y$ whenever
$(\mathfrak{F},\delta)$-chains from $x$ to $y$ and vice versa exist.
We write $x\approx_{\mathfrak{F}} y$ if $x\approx_{\mathfrak{F},\delta} y$ for every $\delta>0$.
We say that $x\in M$ is {\em $\mathfrak{F}$-chain recurrent} if
$x\approx_{\mathfrak{F}} x$.

Denote $Per^\mathfrak{F}(f)$, $\Omega^\mathfrak{F}(x)$ and $CR^\mathfrak{F}(f)$ the set of $\mathfrak{F}$-periodic, $\mathfrak{F}$-nonwandering and $\mathfrak{F}$-chain recurrent points respectively.
We have these sets are invariant, $\Omega^\mathfrak{F}(f)$ is closed nonempty and
$Per^\mathfrak{F}(f)\subset \Omega^\mathfrak{F}(f)$.
Furthermore,
$Per(f)\subset Per^\mathfrak{F}(f)$ and $\Omega(f)\subset \Omega^\mathfrak{F}(f)$.

With these definitions we state the following proposition.

\begin{pro}
\label{le1}
Let $f:M\to M$ be a homeomorphism preserving $\mathfrak{F}$.
If $f$ is expansive and has the shadowing property (both with respect to $\mathfrak{F}$), then
$CR^\mathfrak{F}(f)\subset\overline{Per^\mathfrak{F}(f)}\subset \Omega^\mathfrak{F}(f).$
\end{pro}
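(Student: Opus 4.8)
The plan is to prove the two inclusions separately. The inclusion $\overline{Per^\mathfrak{F}(f)}\subset\Omega^\mathfrak{F}(f)$ is immediate from the properties recorded just before the statement: $Per^\mathfrak{F}(f)\subset\Omega^\mathfrak{F}(f)$ and $\Omega^\mathfrak{F}(f)$ is closed, whence $\overline{Per^\mathfrak{F}(f)}\subset\overline{\Omega^\mathfrak{F}(f)}=\Omega^\mathfrak{F}(f)$. So all the work goes into showing $CR^\mathfrak{F}(f)\subset\overline{Per^\mathfrak{F}(f)}$, which is a foliated version of the classical fact that expansivity together with shadowing pushes the chain recurrent set into the closure of the periodic points. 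I will follow the classical scheme: shadow a periodic pseudo orbit running through a chain recurrent point, and then use expansivity to force the shadowing orbit to be periodic, here only at the level of leaves.

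Fix $x\in CR^\mathfrak{F}(f)$ and $\eta>0$; the goal is to produce an $\mathfrak{F}$-periodic point within $\eta$ of $x$. Put $\epsilon_0=\eta$ and let $e>0$ be an expansivity constant (with respect to $\mathfrak{F}$) for $\epsilon_0$; since any smaller constant still works, I may assume $e\le\epsilon_0$. Apply the shadowing property with respect to $\mathfrak{F}$ with $\epsilon:=e/2$ to obtain $\delta>0$. Since $x$ is $\mathfrak{F}$-chain recurrent, there is an $(\mathfrak{F},\delta/2)$-chain $x=x_0,x_1,\dots,x_r=x$ with $r\ge1$; because an $(\mathfrak{F},\delta')$-chain is an ordinary $2\delta'$-pseudo orbit (the ambient distance being dominated by the intrinsic one), repeating this chain periodically yields a $\delta$-pseudo orbit $(z_k)_{k\in\mathbb{Z}}$ with $z_0=x$ and $z_{k+r}=z_k$ for all $k$. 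Shadowing with respect to $\mathfrak{F}$ then gives an $(\mathfrak{F},e/2)$-orbit $(w_k)_{k\in\mathbb{Z}}$ with $d(w_k,z_k)\le e/2$ for all $k$; in particular $d(w_0,x)\le e/2\le\eta/2$.

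Now compare $(w_k)_{k\in\mathbb{Z}}$ with the shift $(w_{k+r})_{k\in\mathbb{Z}}$, which is again an $(\mathfrak{F},e/2)$-orbit. Using $z_{k+r}=z_k$ we get $d(w_k,w_{k+r})\le d(w_k,z_k)+d(z_{k+r},w_{k+r})\le e$ for every $k$, so both sequences are $(\mathfrak{F},e)$-orbits that remain $e$-close, and expansivity with respect to $\mathfrak{F}$ yields $w_r\in\mathfrak{F}_{\epsilon_0}(w_0)$, hence $\mathfrak{F}(w_r)=\mathfrak{F}(w_0)$. On the other hand $f$ preserves $\mathfrak{F}$ and $f(w_k)\in\mathfrak{F}_{e/2}(w_{k+1})$, so $f(\mathfrak{F}(w_k))=\mathfrak{F}(f(w_k))=\mathfrak{F}(w_{k+1})$ for every $k$, and inductively $\mathfrak{F}(f^r(w_0))=f^r(\mathfrak{F}(w_0))=\mathfrak{F}(w_r)=\mathfrak{F}(w_0)$ with $r\ge1$; that is, $w_0$ is $\mathfrak{F}$-periodic. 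Since $d(w_0,x)\le\eta/2<\eta$, this gives $x\in\overline{Per^\mathfrak{F}(f)}$ and finishes the argument. The one delicate point is the bookkeeping of the constants $\epsilon_0,e,\delta$ so that the shifted sequence $(w_{k+r})$ falls within the regime where expansivity with respect to $\mathfrak{F}$ applies; once $w_r\in\mathfrak{F}_{\epsilon_0}(w_0)$ is secured, turning it into genuine $\mathfrak{F}$-periodicity of $w_0$ is just the observation that $f$ permutes the leaves of $\mathfrak{F}$.
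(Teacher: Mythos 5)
Your argument is correct and follows essentially the same route as the paper's proof: repeat an $(\mathfrak{F},\delta)$-chain through $x$ into a periodic pseudo orbit, shadow it by an $(\mathfrak{F},\cdot)$-orbit, compare that orbit with its shift by $r$ via expansivity with respect to $\mathfrak{F}$ to get $\mathfrak{F}(w_r)=\mathfrak{F}(w_0)$, and then use leaf preservation to conclude $\mathfrak{F}$-periodicity of $w_0$. Your explicit remark that an $(\mathfrak{F},\delta')$-chain is an ordinary $2\delta'$-pseudo orbit is a small point the paper passes over silently, but the substance is identical.
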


\begin{proof}
It suffices to prove
$CR^\mathfrak{F}(f)\subset \overline{Per^\mathfrak{F}(f)}.$
Fix $x\in CR^\mathfrak{F}(f)$ and arbitrary $\epsilon>0$.
Take $\epsilon_0=\epsilon$ and let
$e$ be an expansivity constant with respect to $\mathfrak{F}$ for this $\epsilon_0$.
Take $\delta>0$ from the shadowing property with respect to $\mathfrak{F}$ for $\epsilon'=\frac{1}8\min(e,\epsilon)$.

Since $x\in CR^\mathfrak{F}(f)$, there is $(\mathfrak{F},\delta)$-chain
$x_0,\cdots, x_r$ from $x$ to itself.
Define
$$
x_k=x_l\mbox{ whenever }k=nr+l\mbox{ for some }n\in\mathbb{Z}\mbox{ and }0\leq l\leq r-1.
$$
Then, $(x_k)_{k\in\mathbb{Z}}$ is a $\delta$-pseudo orbit and so
there is an $(\mathfrak{F},\epsilon')$-orbit $(y_k)_{k\in\mathbb{Z}}$ such that
$$
d(x_k,y_k)\leq\epsilon',\quad\quad\forall k\in\mathbb{Z}.
$$
Define
another sequence $(z_k)_{k\in \mathbb{Z}}$ by
$z_k=y_{k+r}$ for all $k\in\mathbb{Z}$.
Then, $(z_k)_{k\in\mathbb{Z}}$ is a $(\mathfrak{F},e)$-orbit
with
$$
d(y_k,z_k)\leq d(x_k,y_k)+d(x_k,y_{k-r})=d(x_k,y_k)+d(x_{k+r},y_{k+r})\leq 2\epsilon'<e,
$$
for all $k\in \mathbb{Z}$.

Then,
$$
y_0\in \mathfrak{F}_{\epsilon_0}(y_{r})
$$
by the expansivity with respect to $\mathfrak{F}$. In particular,
$$
\mathfrak{F}(y_0)=\mathfrak{F}(y_r).
$$
Now, it follows from the definition that
$$
y_{r-1}\in\mathfrak{F}(y_{r-1}).
$$
So,
$$
f(y_{r-1})\in \mathfrak{F}(f(y_{r-1})).
$$
But also
$$
f(y_{r-1})\in \mathfrak{F}_e(y_r)\subset \mathfrak{F}(y_r)
$$
so
$$
\mathfrak{F}(f(y_{r-1}))=\mathfrak{F}(y_r).
$$
Repeating the process we get
$$
\mathfrak{F}(f^r(y_0))=\mathfrak{F}(y_r).
$$
It follows that
$$
\mathfrak{F}(f^r(y_0))=\mathfrak{F}(y_0)
$$
hence
$$
y_0\in Per^\mathfrak{F}(f).
$$
Since
$$
d(x,y)=d(x_0,y_0)\leq \epsilon'<\epsilon
$$
and $\epsilon>0$ is arbitrary,
$x\in \overline{Per^\mathfrak{F}(f)}$ completing the proof.
\end{proof}

More examples are as follows.
Recall that a foliation $\mathfrak{F}$ is {\em uniformly compact} when all leaves are compact with finite holonomy.
In such a case the quotient topology of $M/\mathfrak{F}$ is compact and generated by the Hausdorff distance between the leaves of $\mathfrak{F}$ (c.f. \cite{e}).
We denote by $f/\mathfrak{F}:M/\mathfrak{F}\to M/\mathfrak{F}$ the quotient map.

\begin{exa}
Let $f:M\to M$ be homeomorphism of a closed manifold with an invariant uniformly compact foliation $\mathfrak{F}$.
Then,
\begin{itemize}
\item
If $f$ is expansive with respect to $\mathfrak{F}$, then $f/\mathfrak{F}$ is expansive.
\item
If $f$ has the shadowing property with respect to $\mathfrak{F}$, then
$f/\mathfrak{F}$ has the shadowing property.
\end{itemize}
\end{exa}

Since there are no expansive homeomorphisms of $S^1$, we can apply the above example in the one below.

\begin{exa}
No homeomorphism of the two torus $T^2$ can be
expansive with respect to the foliation by vertical circles of $T^2$.
\end{exa}

It was proved by \cite{hps} (see also \cite{psw}) that every diffeomorphism $f:M\to M$ which is normally hyperbolic and expansive with respect to a $C^1$ foliation $\mathfrak{F}$ satisfies the following kind of structural stability with respect to $\mathfrak{F}$: for every $C^1$ small perturbation $g:M\to M$ there are a foliation $\mathfrak{F}_g$ which is invariant under $g$ and a homeomorphism $h:M\to M$ sending the leaves of $\mathfrak{F}$ onto that of $\mathfrak{F}$ in an equivariant way.
The following result deals with homeomorphisms $f:M\to M$ preserving a foliation $\mathfrak{F}$ and with $C^0$ small perturbations of it.

\begin{pro}
\label{thA}
Let $f:M\to M$ be a homeomorphism preserving $\mathfrak{F}$.
If $f$ is expansive and has the shadowing property (both with respect to $\mathfrak{F}$), then $f$ is topologically stable with respect to $\mathfrak{F}$.
\end{pro}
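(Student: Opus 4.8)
The plan is to adapt Walters' argument \cite{w} that expansivity together with shadowing yields topological stability, carried out relative to $\mathfrak{F}$ and with the semiconjugation replaced by an $\mathfrak{F}$-valued map assembled from shadowing orbits. Fix $\epsilon>0$, set $\epsilon_0=\epsilon$, and let $e>0$ be an expansivity constant with respect to $\mathfrak{F}$ for $\epsilon_0$; by Lemma \ref{l1} we may take $e$ to also be a uniform expansivity constant for $\epsilon_0$, and we may assume $e\le\epsilon$. Put $\epsilon'=e/4$ and let $\delta>0$ be given by the shadowing property with respect to $\mathfrak{F}$ for $\epsilon'$. I claim this $\delta$ works. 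Given a homeomorphism $g:M\to M$ with $d_{C^0}(f,g)\le\delta$, for each $x\in M$ the sequence $(g^k(x))_{k\in\mathbb{Z}}$ is a $\delta$-pseudo orbit of $f$, since $d(f(g^k(x)),g^{k+1}(x))\le d_{C^0}(f,g)\le\delta$; I then set
$$
\mathcal{O}(x)=\{(y_k)_{k\in\mathbb{Z}}\ :\ (y_k)_{k\in\mathbb{Z}}\text{ is an }(\mathfrak{F},\epsilon')\text{-orbit with }d(y_k,g^k(x))\le\epsilon'\ \text{for all }k\},
$$
which is nonempty by the shadowing property with respect to $\mathfrak{F}$, and define $H(x)=\{y_0:(y_k)_{k\in\mathbb{Z}}\in\mathcal{O}(x)\}$.

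The next step is to check that $H$ has the features required by the definition of topological stability with respect to $\mathfrak{F}$. For the \emph{$\mathfrak{F}$-valued} property with valuation constant $\epsilon$: any two members of $\mathcal{O}(x)$ are $(\mathfrak{F},e)$-orbits (as $\epsilon'<e$) that stay $2\epsilon'<e$ apart, so expansivity with respect to $\mathfrak{F}$ gives $y'_0\in\mathfrak{F}_\epsilon(y_0)$ for all $y_0,y'_0\in H(x)$, i.e. $H(x)\subset\bigcap_{y\in H(x)}\mathfrak{F}_\epsilon(y)$. For \emph{compact-valuedness}: if $y_0^n\in H(x)$ and $y_0^n\to y_0$, I would choose $(y^n_k)_k\in\mathcal{O}(x)$ and, by a Cantor diagonal argument, a subsequence along which $y^n_k\to y_k$ for each $k$; passing to the limit in $f(y^n_k)\in\mathfrak{F}_{\epsilon'}(y^n_{k+1})$ and $d(y^n_k,g^k(x))\le\epsilon'$ (using, as in the proofs of Lemmas \ref{l1} and \ref{lei}, that the relation ``$b\in\mathfrak{F}_{\epsilon'}(a)$'' passes to limits) shows $(y_k)_k\in\mathcal{O}(x)$ and $y_0\in H(x)$, so $H(x)$ is closed. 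For $d_{C^0}(H,i_M)\le\epsilon$: every $y_0\in H(x)$ has $d(y_0,x)=d(y_0,g^0(x))\le\epsilon'\le\epsilon$. For the conjugacy inclusion: if $y_0\in H(x)$ comes from $(y_k)_k\in\mathcal{O}(x)$, the shifted sequence $(y_{k+1})_k$ is again an $(\mathfrak{F},\epsilon')$-orbit and $\epsilon'$-shadows $(g^k(g(x)))_k$, hence $y_1\in H(g(x))$, and $f(y_0)\in\mathfrak{F}_{\epsilon'}(y_1)\subset\mathfrak{F}_\epsilon(y_1)$, so $f(H(x))\subset\bigcup_{y\in H(g(x))}\mathfrak{F}_\epsilon(y)$.

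It remains to prove that $H$ is continuous with respect to $\mathfrak{F}$ with constant $\epsilon$, and this is where Lemma \ref{l1} does the real work. Given $\rho>0$, let $N\in\mathbb{N}$ be the integer furnished by uniform expansivity for $\epsilon_0=\epsilon$, constant $e$, and this $\rho$. Since $M$ is compact and $g$ a homeomorphism, there is $\Delta>0$ with $d(x,x')\le\Delta\Rightarrow d(g^k(x),g^k(x'))\le e/2$ for all $|k|\le N$. Then for $y\in H(x)$ and $y'\in H(x')$, realized by $(y_k)_k\in\mathcal{O}(x)$ and $(y'_k)_k\in\mathcal{O}(x')$, the triangle inequality gives $d(y_k,y'_k)\le\epsilon'+e/2+\epsilon'=e$ for $|k|\le N$, while $f(y_k)\in\mathfrak{F}_e(y_{k+1})$ and $f(y'_k)\in\mathfrak{F}_e(y'_{k+1})$; applying Lemma \ref{l1} to the finite sequences $(y'_k)_{k=-N}^N$ and $(y_k)_{k=-N}^N$ yields $dist(y,\mathfrak{F}_\epsilon(y'))\le\rho$. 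This establishes the continuity and hence the topological stability of $f$ with respect to $\mathfrak{F}$. I expect the main obstacle to be essentially organizational: arranging the constants so that a single $e$ serves simultaneously as an expansivity constant (for the $\mathfrak{F}$-valued property and the limit arguments) and as a uniform expansivity constant (for the continuity of $H$), with $\epsilon'=e/4$ calibrated so that the two $\epsilon'$-shadowing errors together with the $e/2$ dynamical spreading add up to exactly $e$; the only genuinely delicate analytic point is the passage to limits in the plaque relation, already relied upon in Lemmas \ref{l1} and \ref{lei}.
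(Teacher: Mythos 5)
Your proposal is correct and follows essentially the same route as the paper's proof: the same candidate map $H(x)$ built from $(\mathfrak{F},\epsilon')$-shadowing orbits of $(g^k(x))_{k\in\mathbb{Z}}$, the same shift argument for the conjugacy inclusion, the same expansivity argument for the $\mathfrak{F}$-valued property, and the same use of Lemma \ref{l1} (uniform expansivity) for continuity. The only differences are harmless calibrations of constants ($\epsilon'=e/4$ and $e/2$ where the paper uses $\frac18\min\{e,\epsilon_0\}$ and $e/3$), both of which keep $d(y_k,y'_k)\le e$ as Lemma \ref{l1} requires.
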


\begin{proof}
We have that $f$ is uniformly expansive with respect to $\mathfrak{F}$ by Lemma \ref{l1}.
Take $\epsilon_0>0$.
Fix a uniform expansivity constant $e$ for this $\epsilon_0$.
Let $\delta$ be given by the shadowing property with respect to $\mathfrak{F}$
for
$$
\epsilon'=\frac{1}8\min\{e,\epsilon_0\}.
$$
Let $g:M\to M$ be a homeomorphism such that
$$
d_{C^0}(f,g)\leq\delta.
$$

We shall prove that
$$
H:M\to 2^M
$$
defined by
$$
H(x)=\{y\mid \exists (\mathfrak{F},\epsilon')\mbox{-orbit }
(y_k)_{k\in \mathbb{Z}}\mbox{ s.t. } y=y_0\mbox{ and }
d(y_k,g^k(x))\leq\epsilon',\,\forall k\in\mathbb{Z}\}
$$
is compact-valued, satisfies \eqref{alto} (with $\epsilon$ replaced by $\epsilon_0$) and is continuous with respect to $\mathcal{F}$ with valuation constant $\epsilon_0$ .

First we show
$$
H(x)\neq\emptyset,\quad
\quad\forall x\in M.
$$
Given $x\in M$ since $d_{C^0}(f,g)\leq\delta$, the sequence
$(g^k(x))_{k\in\mathbb{Z}}$ is a $\delta$-pseudo orbit of $f$.
Then, $(g^k(x))_{k\in\mathbb{Z}}$ can be $\epsilon'$-shadowed by an $(\mathfrak{F},\epsilon')$-orbit $(y_k)_{k\in\mathbb{Z}}$.
Since $y_0\in H(x)$ by definition, we obtain the assertion.

To prove that $H(x)$ is compact-valued we can use the Cantor diagonal argument to sequences of $(\mathfrak{F},\epsilon')$-orbits as before.
Next, by replacing $k=0$ in the definition of $H(x)$ we get
$$
d(x,y)\leq\epsilon'<\epsilon_0\quad\quad(\forall y\in H(x))
$$
proving the first expression in \eqref{alto}.

For the second expression take any $y\in H(x)$. Then, $y=y_0$ for some $(\mathfrak{F},\epsilon')$-orbit
$(y_k)_{k\in\mathbb{Z}}$ with
$$
d(y_k,g^k(x))\leq\epsilon',\quad\quad\forall k\in\mathbb{Z}.
$$
Define $\hat{y}_k=y_{k+1}$ for $k\in\mathbb{Z}$.
Then, $(\hat{y}_k)_{k\in\mathbb{Z}}$ is an $(\mathfrak{F},\epsilon')$-orbit
satisfying
$$
d(\hat{y}_k,g^k(g(x)))\leq\epsilon',\quad\quad\forall k\in\mathbb{Z}.
$$
Then,
$$
y_1=\hat{y}_0\in H(g(x)).
$$
But
$$
f(y)=f(y_0)\in \mathfrak{F}_{\epsilon'}(y_1)\subset\mathfrak{F}_{\epsilon_0}(y_1)
$$
so
$$
f(y)\in \bigcup_{y_1\in H(g(x))}\mathfrak{F}_{\epsilon_0}(y_1)
$$
proving the second expression in \eqref{alto}.

Afterwards, we prove that $H$ is $\mathfrak{F}$-valued with valuation constant $\epsilon_0$ namely
\begin{equation}
\label{marvin}
H(x)\subset \bigcap_{y\in H(x)}\mathfrak{F}_{\epsilon_0}(y),\quad\quad\forall x\in M.
\end{equation}
Take $x\in M$ and $y,y'\in H(x)$.
Then,
$$y=y_0\quad\mbox{ and }\quad
y'=y'_0
$$ for some
$(\mathfrak{F},\epsilon')$-orbits
$$
(y_k)_{k\in\mathbb{Z}}
\quad\mbox{ and }\quad
(y_k')_{k\in\mathbb{Z}}$$ with
$$
d(y_k,g^k(x))\leq\epsilon'\quad\mbox{ and }\quad
d(y_k',g^k(x))\leq\epsilon',\quad\quad\forall k\in\mathbb{Z}.
$$
Then,
$$
d(y_k,y_k')\leq d(y_k,g^k(x))+d(y_k',g^k(x))\leq 2\epsilon'<e,\quad\quad\forall k\in\mathbb{Z}.
$$
Since $e$ is a uniform expansivity constant for $\epsilon_0$,
$e$ is also an expansivity constant for $\epsilon_0$. Hence
$
y'=y'_0\in \mathfrak{F}_{\epsilon_0}(y_0)=\mathfrak{F}_{\epsilon_0}(y)
$
proving \eqref{marvin}.

To finish, we prove that $H$ is continuous with respect to $\mathcal{F}$ with valuation constant $\epsilon_0$.
Let $\rho>0$ and $N$ be given by the fact that $e$ is a uniform expansivity constant for $\epsilon_0$ (see Lemma \ref{l1}).
Since $g$ is continuous and $M$ compact, $g$ is uniformly continuous. So, there is $\Delta>0$ such that
$$
d(x,x')\leq\Delta\quad\Longrightarrow \quad d(g^k(x),g^k(x'))\leq \frac{e}3,\quad\forall -N\leq k\leq N.
$$
Take $x,x'\in M$ with
$$
d(x,x')\leq\Delta.
$$
Take also
$$
y'\in H(x')\quad\mbox{ and }\quad y\in H(x).
$$
Then, $y'=y'_0$ and $y=y_0$ for some
$(\mathfrak{F},\epsilon')$-orbits
$(y'_k)_{k\in\mathbb{Z}}$ and $(y_k)_{k\in\mathbb{Z}}$ satisfying
$$
d(y'_k,g^k(x'))\leq\epsilon'\quad\mbox{ and }\quad d(y_k,g^k(x))\leq\epsilon',\quad\forall k\in\mathbb{Z}.
$$
So, for all
$-N\leq k\leq N$ one has
\begin{eqnarray*}
d(y_k,y'_k) & \leq &  d(y'_k,g^k(x'))+d(g^k(x'),g^k(x))+d(y_k,g^k(x))\\
& \leq & 2\epsilon'+\frac{e}3\\
& < & \frac{7 e}{12}\\
& < & e.
\end{eqnarray*}
Thus, by uniform expansivity, since $y'_0=y'$ and $y_0=y$,
$$
dist(y,\mathfrak{F}_{\epsilon_0}(y'))\leq \rho.
$$
Since $y\in H(x)$ and $y'\in H(x')$ are arbitrary, $H$ is continuous with respect to $\mathfrak{F}$ with constant $\epsilon_0$.
This completes the proof.
\end{proof}

Let us give two short applications of the above proposition. The first one is as follows.

\begin{exa}
Every homeomorphism of a closed manifold is topologically stable with respect to the foliation with a unique leaf.
\end{exa}

\begin{proof}
This follows from Proposition \ref{thA} since every homeomorphism is expansive and has the shadowing property with respect to that foliation (by examples \ref{lotus} and \ref{interferiu} respectively).
\end{proof}

The second one is given below.

\begin{exa}
By taking the trivial foliation by points in Proposition \ref{thA} we
obtain that every expansive homeomorphism with the shadowing property of a closed manifold is topologically stable. This is precisely Walters's stability theorem \cite{w}.
\end{exa}

Next, we prove our last proposition.

\begin{pro}
\label{thB}
Let $f:M\to M$ be a homeomorphism preserving $\mathfrak{F}$.
If $f$ is topologically stable with respect to $\mathfrak{F}$, then
$f$ has the shadowing property with respect to $\mathfrak{F}$.
\end{pro}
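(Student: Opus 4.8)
The plan is to show that $f$ has the \emph{finite} shadowing property with respect to $\mathfrak{F}$ and then invoke Lemma \ref{lei}. To this end I would fix $\epsilon>0$, put $\epsilon_1=\epsilon/3$, and let $\delta_0>0$ be the constant given by the topological stability of $f$ with respect to $\mathfrak{F}$ for this $\epsilon_1$: every homeomorphism $g$ with $d_{C^0}(f,g)\le\delta_0$ then carries a compact-valued map $H:M\to 2^M$, continuous with respect to $\mathfrak{F}$ with valuation constant $\epsilon_1$, such that $d_{C^0}(H,i_M)\le\epsilon_1$ and $f(H(x))\subset\bigcup_{y\in H(g(x))}\mathfrak{F}_{\epsilon_1}(y)$ for all $x\in M$. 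Using uniform continuity of $f$ I would then pick $\delta>0$ with $\delta\le\epsilon_1$ and small enough that any sequence obtained from a $\delta$-chain by displacing each of its points a distance less than $\delta$ is still a $\delta_0$-chain.

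Given a $\delta$-chain $x_0,\dots,x_n$, the crux is to realize it as a genuine orbit segment of a homeomorphism $C^0$-close to $f$. First I would perturb $x_0,\dots,x_n$ into a $\delta_0$-chain $x_0',\dots,x_n'$ with pairwise distinct points and $d(x_k,x_k')<\delta$ for each $k$; this uses that a closed manifold of positive dimension has no isolated points. Then I would produce a homeomorphism $\phi:M\to M$ with $d_{C^0}(\phi,i_M)\le\delta_0$ and $\phi(f(x_k'))=x_{k+1}'$ for $0\le k\le n-1$, and set $g=\phi\circ f$, so that $d_{C^0}(f,g)=d_{C^0}(\phi,i_M)\le\delta_0$ and $g(x_k')=x_{k+1}'$. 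Since the points $f(x_0'),\dots,f(x_{n-1}')$ are distinct and $d(f(x_k'),x_{k+1}')\le\delta_0$ for each $k$, such a $\phi$ is furnished by the homogeneity of $M$ via the usual isotopy-extension argument (for well-separated images one simply patches together local homeomorphisms on disjoint small balls); this is exactly the finite-pseudo-orbit realization underlying Walters' theorem that topological stability implies the shadowing property (cf.\ \cite{w} and \cite{ah}).

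With $H$ the set-valued map attached to this $g$ by the topological stability of $f$ with respect to $\mathfrak{F}$, I would choose $y_0\in H(x_0')$ (nonempty), and, having chosen $y_k\in H(x_k')$, use the inclusion $f(y_k)\in f(H(x_k'))\subset\bigcup_{y\in H(g(x_k'))}\mathfrak{F}_{\epsilon_1}(y)=\bigcup_{y\in H(x_{k+1}')}\mathfrak{F}_{\epsilon_1}(y)$ to pick $y_{k+1}\in H(x_{k+1}')$ with $f(y_k)\in\mathfrak{F}_{\epsilon_1}(y_{k+1})\subset\mathfrak{F}_\epsilon(y_{k+1})$. Then $y_0,\dots,y_n$ is an $(\mathfrak{F},\epsilon)$-chain, and since $y_k\in H(x_k')$ and $d_{C^0}(H,i_M)\le\epsilon_1$ one gets $d(x_k,y_k)\le d(x_k,x_k')+d(x_k',y_k)<\delta+\epsilon_1\le\epsilon$. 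This establishes the finite shadowing property with respect to $\mathfrak{F}$, and Lemma \ref{lei} then yields the shadowing property with respect to $\mathfrak{F}$.

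I expect the main obstacle to be the realization step, namely constructing $\phi$ (equivalently $g$) with its $C^0$-distance to the identity bounded \emph{independently of the length $n$ of the chain}; this is where the manifold structure of $M$ is genuinely used, and where some care is needed in low dimensions (for $\dim M\le 1$ the only foliations are the trivial ones, for which the statement reduces to classical facts). Everything else is bookkeeping with the definitions of topological stability and shadowing with respect to $\mathfrak{F}$.
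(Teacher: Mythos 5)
Your proposal is correct and follows essentially the same route as the paper's proof: reduce to the finite shadowing property via Lemma \ref{lei}, realize the given $\delta$-chain as a genuine orbit segment of a nearby homeomorphism $g=\phi\circ f$ (the paper does this by invoking Lemmas 9 and 10 of \cite{w}, which require $\dim M\geq 2$ and give exactly the uniform bound on $d_{C^0}(\phi,i_M)$ that you correctly flag as the crux), and then iterate the inclusion $f(H(x'_k))\subset\bigcup_{y\in H(x'_{k+1})}\mathfrak{F}_{\epsilon_1}(y)$ to extract the shadowing $(\mathfrak{F},\epsilon)$-chain. The only differences are bookkeeping (your $\epsilon/3$ versus the paper's $\epsilon/2$, and its $\delta/(4\pi)$ input to Walters' lemmas) and your slightly more compressed treatment of the one-dimensional case, which the paper handles explicitly via Yano's theorem for the point foliation and Example \ref{interferiu} for the single-leaf foliation.
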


\begin{proof}
If $dim(M)=1$, then $M=S^1$ is the circle and $\mathfrak{F}$ is either the trivial foliation by points or the foliation with only one leaf. In the first case, the topological stability with respect to $\mathfrak{F}$ coincides with the topological stability for the trivial foliation by points
(see Example \ref{air}). So, $f$ is a topologically stable circle homeomorphisms.
Then, $f$ has the shadowing property as it is topologically equivalent to a Morse-Smale diffeomorphisms \cite{y}.
In the second case, $f$ has the shadowing property with respect to $\mathfrak{F}$ (see Example \ref{interferiu}).
Therefore, we can assume that $dim(M)\geq2$.

By Lemma \ref{lei} it suffices to prove that $f$ has the finite shadowing property
with respect to $\mathfrak{F}$. For this we follow closely the proof of Theorem 11 in \cite{w}.

Fix $\epsilon>0$.
Let $\delta$ be given by the topological stability of $f$ with respect to $\mathfrak{F}$ for $\epsilon_0=\frac{\epsilon}2$. Let $x_0,\cdots, x_n$ be such that
$$
d(f(x_k),x_{k+1})\leq\frac{\delta}{4\pi},\quad\quad\forall 0\leq k\leq n-1.
$$
Since $dim(M)\geq2$, lemmas 9 and 10 in \cite{w} imply that there are a finite sequence
$x_0',\cdots, x_n'\in M$ and a homeomorphism
$h:M\to M$ such that
\begin{enumerate}
\item[(i)]
$d_{C^0}(h,i_M)\leq\delta$;
\item[(ii)]
$h(f(x_k'))=x_{k+1}'$ ($0\leq k\leq n-1$);
\item[(iii)]
$d(x_k,x_k')\leq \frac{\epsilon}2$ ($0\leq k\leq n$).
\end{enumerate}
Let
$$
g=h\circ f.
$$
Then, $g:M\to M$ is a homeomorphism which by (i) satisfies
$$
d_{C^0}(f,g)\leq\delta.
$$
So, by topological stability with respect to $\mathfrak{F}$,
there is a compact-valued map
$H:M\to 2^M$ which is continuous with respect to $\mathfrak{F}$ with constant $\epsilon_0$
satisfying
\begin{equation}
\label{constitucion}
d_{C^0}(H,i_M)\leq\epsilon_0=\frac{\epsilon}2\quad\mbox{ and }\quad
f(H(x))\subset \bigcup_{z\in H(g(x))}\mathfrak{F}_{\frac{\epsilon}2}(z)
\quad(\forall x\in M).
\end{equation}
On the other hand, the definition of $g$ and (ii) above imply
$$
g(x_k')=x'_{k+1},\quad\quad\forall 0\leq k\leq n-1.
$$
Now, pick
$$
y_0\in H(x_0').
$$
Then, \eqref{constitucion} provides
$$
y_1\in H(g(x_0'))=H(x_1')
$$
such that
$$
f(y_0)\in \mathfrak{F}_{\frac{\epsilon}2}(y_1).
$$
Again \eqref{constitucion} provides
$$
y_2\in H(g(x_1')=H(x'_2)
$$
such that
$$
f(y_1)\in \mathfrak{F}_{\frac{\epsilon}2}(y_2).
$$
Repeating the process we obtain
$y_0,\cdots, y_n\in X$ such that
$$
y_k\in H(x_k')\quad (\forall 0\leq k\leq n)\mbox{ and }
f(y_k)\in\mathfrak{F}_{\frac{\epsilon}2}(y_{k+1})
\quad(\forall 0\leq k\leq n-1).
$$
The second of the above expressions implies that $y_0,\cdots, y_n$ is an $(\mathfrak{F},{\frac{\epsilon}2})$-chain, and so, an
$(\mathfrak{F},{\epsilon})$-chain. Then, (iii) above and \eqref{constitucion} imply
$$
d(y_k,x_k)\leq d(y_k,x_k')+d(x_k',x_k)\leq
\frac{\epsilon}2+\frac{\epsilon}2=\epsilon, \quad\forall 0\leq k\leq n. 
$$
Therefore, $f$ has the finite shadowing property with respect to $\mathfrak{F}$ and we are done.
\end{proof}

Finally, we use the propositions to prove the theorems.

\begin{proof}[Proof of Theorem \ref{taylor}]
Let $f:M\to M$ be a dynamically coherent plaque expansive partially hyperbolic diffeomorphism of a closed manifold.
Then, $f$ is expansive with respect to the central foliation by definition. Moreover,
has the central shadowing property (in the sense of Definition 8 in \cite{kt}).
So, $f$ has the shadowing property with respect to the center foliation as well.
Therefore, $f$ is topologically stable with respect to the central foliation by Proposition \ref{le1}, and so, it is plaque topologically stable. This completes the proof.
\end{proof}

\begin{proof}[Proof of Theorem \ref{cirus}]
Let $f:M\to M$ be a partially hyperbolic diffeomorphism of a closed manifold.
Suppose that $f$ is expansive and topologically stable with respect to a center foliation $\mathfrak{F}^c$.
Then, $f$ also has the shadowing property with respect to $\mathfrak{F}^c$ by Proposition \ref{thB}.
Therefore, the result from Proposition \ref{thA} and the identities $Per^c(f)=Per^{\mathfrak{F}^c}(f)$, $\Omega^c(f)=\Omega^{\mathfrak{F}^c}(f)$ and $CR^c(f)=CR^{\mathfrak{F}^c}(f)$.
\end{proof}

\section*{Acknowledgements}

\noindent
The author gratefully acknowledges the anonymous referee for their valuable comments and suggestions, which helped to improve the clarity and presentation of this paper.

\section*{Funding}

\noindent
B. Shin was supported by the National Research Foundation of Korea(NRF) grant funded by the Korea government(MSIT) (No. RS-2021-NR061831).

\section*{Declaration of competing interest}

\noindent
There is no competing interest.

\section*{Data availability}

\noindent
No data was used for the research described in the article.

\end{document}